
\documentclass{article}
\usepackage[utf8]{inputenc}
\usepackage[T1]{fontenc}
\usepackage{graphicx}
\usepackage{amsmath}
\usepackage{amssymb}
\usepackage{amsthm}
\usepackage[english]{babel}
\usepackage[colorlinks]{hyperref}
\usepackage{url}


\newtheorem{Theorem}{Theorem}[subsection]
\newtheorem{Definition}{Definition}[subsection]
\newtheorem{Lemma}{Lemma}[subsection]

\title{Investigations on the properties of the arithmetic derivative}

\author{Niklas Dahl, Jonas Olsson, Alexander Loiko\footnote{alexandreloiko@gmail.com}}

\date{}

\begin{document}

\maketitle

\begin{abstract}
We investigate the properties of arithmetic differentiation, an attempt to adapt the notion of differentiation to the integers by preserving the Leibniz rule, $(ab)'=a'b+ab'$. This has proved to be a very rich topic with many different aspects and implications to other fields of mathematics and specifically to various unproven conjectures in additive prime number theory. Our paper consists of a self-contained introduction to the topic, along with a couple of new theorems, several of them related to arithmetic differentiation of rational numbers, a topic almost unexplored until now.
\end{abstract}

\tableofcontents

\section{An arithmetic derivative}
The arithmetic derivative function, from here and throughout the entire  text denoted by $n'$,  is a function $n': \mathbb{N} \rightarrow \mathbb{N}$ defined recursively by 
\begin{Definition}
\label{def:MEGA}
\begin{itemize}
    \item $p'=1$ for all prime numbers
    \item $(ab)'=a'b+ab'$ for all natural numbers $a,b$
\end{itemize}

\end{Definition}

We will begin by computing the arithmetic derivative (henceforth sometimes referred to as AD) for two interesting special cases.

\begin{Theorem}
  \[
     1' = 0
  \]
\end{Theorem}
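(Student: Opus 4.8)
The plan is to exploit the Leibniz rule from Definition \ref{def:MEGA} with the most degenerate possible choice of arguments, namely $a=b=1$. Since $1 = 1 \cdot 1$, applying $(ab)' = a'b + ab'$ yields $1' = 1' \cdot 1 + 1 \cdot 1' = 2 \cdot 1'$. Subtracting $1'$ from both sides gives $1' = 0$, which is exactly the claim. This is really a one-line computation once one notices that $1$ is idempotent under multiplication.

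The only point that deserves a moment's care is whether the Leibniz rule is even applicable here: the definition states it "for all natural numbers $a,b$", so as long as $1 \in \mathbb{N}$ in the convention used by the paper, the substitution $a=b=1$ is legitimate and nothing further is needed. It is worth remarking that this argument does not rely on the prime-value clause $p'=1$ at all, only on the multiplicative structure; the same reasoning would force the derivative of the multiplicative identity to vanish in any setting where a Leibniz-type rule is imposed.

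I do not anticipate any genuine obstacle. If one wanted to be maximally pedantic, the subtraction step uses that we are working inside a structure where cancellation/subtraction makes sense for the value $1'$ (an integer), which is unproblematic; alternatively one can phrase it as: the equation $x = 2x$ over $\mathbb{Z}$ has the unique solution $x = 0$. I would present the proof in two short sentences and move on.
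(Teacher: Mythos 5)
Your proof is correct and is essentially identical to the paper's: both write $1 = 1\cdot 1$ (the paper phrases it as $1^2$), apply the Leibniz rule to get $1' = 2\cdot 1'$, and conclude $1' = 0$ by cancellation in $\mathbb{Z}$. Nothing further is needed.
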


\begin{proof}
  Using the Leibniz rule it is possible to prove that
  \[
     1=1^2 \Rightarrow 1' = (1^2)'\\
     \Leftrightarrow 1' = 1\cdot 1' + 1' \cdot 1 \\
     \Leftrightarrow 1' = 2\cdot 1'\\
     \Rightarrow 1' = 0
  \]
\end{proof}

\begin{Theorem}
  \[
    0'=0
  \]
\end{Theorem}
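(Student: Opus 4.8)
The plan is to imitate the proof of the previous theorem: write $0$ as a nontrivial product and feed it through the Leibniz rule. The cleanest choice is $0 = 0\cdot 0$, which gives
\[
  0' = (0\cdot 0)' = 0'\cdot 0 + 0\cdot 0' = 0 + 0 = 0,
\]
so the identity falls out in a single line. If one prefers an argument structurally identical to the $1'=0$ proof, one can instead use $0 = 2\cdot 0$: then $0' = 2'\cdot 0 + 2\cdot 0' = 1\cdot 0 + 2\cdot 0' = 2\cdot 0'$, and subtracting $0'$ from both sides yields $0' = 0$. Either route works; I would present the first.

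The only thing that needs a word of justification is that we are entitled to apply the Leibniz rule to the argument $0$ at all — i.e. that $0$ counts as one of the "natural numbers $a,b$" in Definition \ref{def:MEGA}. This is the convention adopted here, as is clear both from the stated domain $n'\colon \mathbb{N}\to\mathbb{N}$ and from the fact that the theorem under discussion is about $0'$; so $0\in\mathbb{N}$ throughout. Granting that, there is essentially no obstacle: the multiplicative structure of $0$ is so degenerate that any factorization collapses the Leibniz identity onto $0'$ itself.

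If I wanted to be extra careful I would also remark that this is consistent with the "limiting" behaviour one expects — extending the identity $(0\cdot a)' = 0'a + 0\cdot a' = 0'a$ forces $0' = 0'a$ for every $a$, which is only possible if $0' = 0$ — so the value is not merely permitted but forced by the Leibniz rule. No induction or case analysis is required, and no earlier result beyond Definition \ref{def:MEGA} (and, if one uses the second route, the evaluation $2' = 1$) is needed.
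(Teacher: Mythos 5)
Your proposal is correct, and your second route ($0 = 2\cdot 0$, hence $0' = 2'\cdot 0 + 2\cdot 0' = 2\cdot 0'$, hence $0'=0$) is word-for-word the paper's own proof. Your preferred first route, via the decomposition $0 = 0\cdot 0$, is a genuinely different (and slightly cleaner) choice: it yields $0' = 0'\cdot 0 + 0\cdot 0' = 0$ directly, with no need for the final cancellation step ``$x = 2x \Rightarrow x = 0$'' that the paper's version relies on (a step which is harmless in $\mathbb{N}$ but is still an extra inference). The trade-off is purely stylistic --- the paper's choice of $2\cdot 0$ makes the argument structurally parallel to the preceding proof that $1'=0$, whereas yours exploits the degeneracy of $0$ more aggressively. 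Your closing remark that the Leibniz rule \emph{forces} $0'=0$ (since $(0\cdot a)' = 0'a$ for every $a$) is a nice observation the paper does not make, and your care about whether $0$ is admitted as an argument of the rule is reasonable, though the paper implicitly grants this. No gaps.
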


\begin{proof}
  This proof is similar to the previous one.

  \[ 0=2\cdot 0\Rightarrow \\
  0' = 2'\cdot 0 + 2\cdot 0' \Leftrightarrow \\
  0' = 2\cdot 0' \Rightarrow\\
  0'= 0
  \]
  
\end{proof}

We will shortly prove that $n'$ is well-defined. The proof depends on the following theorem.

\begin{Theorem} 
\label{log}
The solutions of the functional equation 

\[L:\mathbb{N}\rightarrow\mathcal{S}\]
\begin{equation}
  \label{eq:ekv1}
\ L(a)+L(b)=L(ab)
\end{equation}

in which   $S$ is an arbitrary ring under the usual operations $+$  and $\cdot$ are given by

 \begin{equation}\label{soln1}
    L(n)=\sum_{i=1}^{k}\alpha_{i}f(p_{i})
\end{equation}

where $\prod_{i=1}^{k}p_{i}^{\alpha_{i}}=n$ is the canonical prime factorization of $n$  and $f:\mathbb{P} \rightarrow \mathcal{S}$ is any function from the set $\mathbb{P}$ of all primes to $\mathcal{S}$.
\end{Theorem}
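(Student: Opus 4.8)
The plan is to prove the two inclusions separately: that every function of the displayed form \eqref{soln1} satisfies the functional equation \eqref{eq:ekv1}, and conversely that every solution of \eqref{eq:ekv1} is of that form (so that \eqref{soln1} sets up a bijection between solutions $L$ and arbitrary functions $f:\mathbb{P}\to\mathcal{S}$).

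For the first direction I would fix an arbitrary $f:\mathbb{P}\to\mathcal{S}$, define $L$ by \eqref{soln1}, and take $a,b\in\mathbb{N}$ with factorizations written over a common list of primes, $a=\prod p_i^{\alpha_i}$ and $b=\prod p_i^{\beta_i}$ with exponents allowed to be zero. The fundamental theorem of arithmetic gives $ab=\prod p_i^{\alpha_i+\beta_i}$, and since addition in $\mathcal{S}$ is commutative and associative, $L(ab)=\sum_i(\alpha_i+\beta_i)f(p_i)=\sum_i\alpha_i f(p_i)+\sum_i\beta_i f(p_i)=L(a)+L(b)$, where $\alpha f(p)$ denotes the $\alpha$-fold sum of $f(p)$ with itself. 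This is the routine half.

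For the converse, let $L$ be any solution. Setting $a=b=1$ in \eqref{eq:ekv1} gives $L(1)+L(1)=L(1)$, hence $L(1)=0$, using that $(\mathcal{S},+)$ is a group so that cancellation is legitimate. I then define $f:=L|_{\mathbb{P}}:\mathbb{P}\to\mathcal{S}$ and prove that $L(n)=\sum_{i=1}^{k}\alpha_i f(p_i)$ for every $n=\prod_{i=1}^{k}p_i^{\alpha_i}$ by induction on $\Omega(n):=\sum_i\alpha_i$, the number of prime factors of $n$ counted with multiplicity. The base case $\Omega(n)=0$ is $n=1$, already handled. For the inductive step, write $n=p\cdot m$ with $p$ a prime dividing $n$ and $\Omega(m)=\Omega(n)-1$; then \eqref{eq:ekv1} gives $L(n)=L(p)+L(m)=f(p)+L(m)$, and the induction hypothesis applied to $m$ completes the step.

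There is no deep obstacle here; the one point genuinely worth addressing is the well-definedness of the right-hand side of \eqref{soln1}, since a priori $\sum_i\alpha_i f(p_i)$ could depend on the chosen ordering of the factorization — but the uniqueness part of the fundamental theorem of arithmetic fixes the multiset of prime powers, and commutativity of $+$ in $\mathcal{S}$ makes the sum order-independent. I would also flag the convention that $\mathbb{N}=\{1,2,3,\dots\}$ throughout this statement: were $0$ admitted, taking $b=0$ in \eqref{eq:ekv1} would force $L(a)=L(a)+L(0)$, i.e. $L\equiv 0$, so the equation is to be read over the positive integers (which is also exactly what the subsequent well-definedness argument for $n'$ needs). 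Finally it is worth remarking that the proof uses nothing about $\mathcal{S}$ beyond its being an abelian group under $+$; the ring multiplication plays no role in Theorem~\ref{log}.
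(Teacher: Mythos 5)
Your proof is correct and follows essentially the same route as the paper's: both directions are handled by restricting $L$ to the primes and extending multiplicatively by induction, then verifying the converse via unique factorization. You supply a few details the paper leaves implicit (the base case $L(1)=0$, well-definedness of the sum, and the observation that only the additive group of $\mathcal{S}$ is used), but the underlying argument is the same.
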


\begin{proof}
First we show that all solutions to (\ref{eq:ekv1}) are of the form (\ref{soln1}).
It follows by induction that
 
 \[L\left(\prod_{i=1}^{k}a_{i}\right)=\sum_{i=1}^{k}L(a_{i})\]
 
and 
 
 \[L\left(a^{b}\right)=bL(a)\]

Let $n=\prod_{i=1}^{k}p_{i}^{\alpha_{i}}$ be an arbitrary integer. We find that

 \[L(n)=\sum_{i=1}^{k}\alpha_{i}L(p_{i})\]

We define $f:\mathbb{P} \rightarrow \mathcal{S}$ as $f(p)=L(p)$ for every prime $p$. Then 

\[L(n)=\sum_{i=1}^{k}\alpha_{i}f(p_{i})\]

Next we prove that for every function $f:\mathbb{P}\rightarrow\mathcal{S}$
is $L(n)=\sum_{i=1}^{k}\alpha_{i}f(p_{i})$ a solution to (\ref{eq:ekv1})

We let $a=\prod_{i=1}^{k}p_{i}^{\alpha_{i}},\ b=\prod_{i=1}^{k}p_{i}^{\beta_{i}}$ in (\ref{eq:ekv1})

\begin{align*}
  \label{eq:bevis}
  \text{LHS}=L(a) + L(b) = 
  L\left(
  \prod_{i=1}^{k}p_{i}^{\alpha_{i}}
\right)+
L\left(
  \prod_{i=1}^{k}p_{i}^{\beta_{i}}
\right)
&=\\
\sum_{i=1}^{k}
\left(
  \alpha_{i}+\beta_{i}
\right)
f(p_{i})=
L\left(
  \prod_{i=1}^{k} p_i^{\alpha_i+\beta_i}
\right)&=\text{RHS}  
\end{align*}

\end{proof}

\begin{Definition} 
\label{def:logdef}
We call $f$ the \textit{prime} function of $L$. A solution $L$ to (\ref{eq:ekv1}) in \ref{log} we call an \textbf{arithmetically logarithmic} function.
\end{Definition}

\begin{Theorem}
\label{derivdefine}
The derivative $n'$ defined  in (\ref{def:MEGA}) 
is well-defined.
\end{Theorem}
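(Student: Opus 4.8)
The plan is to split the statement into an \emph{existence} claim and a \emph{uniqueness} claim: first produce a concrete function obeying the two clauses of Definition~\ref{def:MEGA}, and then argue that no competing function can exist.

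For existence I would apply Theorem~\ref{log} with the ring $\mathcal{S}=\mathbb{Q}$ and the prime function $f(p)=1/p$, obtaining an arithmetically logarithmic $L:\mathbb{N}\to\mathbb{Q}$ with $L(n)=\sum_{i=1}^{k}\alpha_i/p_i$ for $n=\prod_{i=1}^{k}p_i^{\alpha_i}$ and with $L(ab)=L(a)+L(b)$. Set $D(n):=nL(n)$ for $n\ge 1$ and $D(0):=0$. Three verifications remain: (i) $D$ takes values in $\mathbb{N}$, since $D(n)=\sum_{i}\alpha_i\,(n/p_i)$ is a sum of nonnegative integers because each $p_i$ divides $n$; (ii) $D(p)=p\cdot(1/p)=1$ for every prime, matching the first clause; (iii) $D(ab)=ab\,L(ab)=ab\,L(a)+ab\,L(b)=b\,D(a)+a\,D(b)$, which is the Leibniz rule, the degenerate cases involving $0$ being immediate. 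Thus $D$ satisfies Definition~\ref{def:MEGA}.

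For uniqueness, let $E:\mathbb{N}\to\mathbb{N}$ be any function satisfying both clauses, and prove $E=D$ by strong induction on $n$. The values $E(0)=0$ and $E(1)=0$ are forced exactly as in the proofs that $0'=0$ and $1'=0$ above (from $0=2\cdot 0$ and $1=1^2$), which only use the Leibniz clause. If $n$ is prime, the first clause gives $E(n)=1=D(n)$. If $n\ge 4$ is composite, choose any factorization $n=ab$ with $1<a,b<n$; the Leibniz clause yields $E(n)=E(a)b+aE(b)$, which by the induction hypothesis equals $D(a)b+aD(b)$, and this equals $D(ab)=D(n)$ precisely because $D$ was shown to satisfy the Leibniz rule. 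Hence $E=D$, so there is a unique function meeting the definition, and we name it $n'$.

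The one genuinely subtle point — which is exactly what ``well-defined'' refers to here — is that a composite $n$ has several factorizations $n=ab$, each imposing a separate constraint through the Leibniz clause, so the recursion is overdetermined and could a priori be inconsistent. The existence half dissolves this worry: once a single function $D$ obeying the Leibniz identity is in hand, every factorization of $n$ must return the same number $D(n)$, so the inductive step does not depend on which factorization is picked. All the remaining work is routine algebraic checking.
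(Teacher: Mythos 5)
Your proof is correct and rests on the same key tool as the paper's own argument: Theorem~\ref{log} applied with the prime function $f(p)=1/p$, yielding the closed form $n'=n\sum_{i}\alpha_i/p_i$. The paper extracts both existence and uniqueness from that theorem in one stroke via the logarithmic derivative $ld(n)=n'/n$, whereas you supplement it with an explicit strong-induction uniqueness argument and a check that the formula is integer-valued --- details the paper leaves implicit.
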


\begin{proof}

Let $ld(n)=\frac{n'}{n}$ then the conditions on $ld$ are:
\begin{itemize}
\item $ld(p)=\frac{1}{p}$ for all prime numbers p
\item $ab\cdot ld(ab)=ab\cdot ld(a)+ab\cdot ld(b)\Leftrightarrow ld(a)+ld(b)=ld(ab)$
\end{itemize}
According to \ref{def:logdef} $ld$ is an arithmetically logarithmic function with the prime function $f(p)=\frac{1}{p}$. Then, by \ref{log}, it is well defined and can be written as

\[ld\left(\prod_{i=1}^{k}p_{i}^{\alpha_{i}}\right)=\sum_{i=1}^{k}\frac{\alpha_{i}}{p_{i}}\]
 
 or $n'=n\sum_{i=1}^{k}\frac{\alpha_{i}}{p_{i}}$
\end{proof}

\section{General properties of the derivative}
\subsection{Inequalities}



Here we will present some general properties of the arithmetic derivative.
All of these theorems were originally proved in \cite{bar} and are presented here for two reasons: we will use several of the theorems and definitions later and the proofs provide interesting examples of previous work in the field.
\begin{Theorem}\label{thm:ub}
  Let $n$ be a natural number and $k$ be the smallest prime factor in $n$. Then
  or every natural number $n$, 
\[
\frac{n\cdot \log_k{n}}{k}\geq n'
\]
with equality iff $n$ is a power of $k$.
\end{Theorem}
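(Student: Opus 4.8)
The plan is to use the explicit formula $n' = n \sum_{i=1}^{k} \frac{\alpha_i}{p_i}$ from Theorem \ref{derivdefine}, where I write $n = \prod_{i=1}^k p_i^{\alpha_i}$ and let $k$ (overloading notation, but following the statement) denote the smallest prime factor. The key observation is that every prime $p_i$ dividing $n$ satisfies $p_i \geq k$, so termwise $\frac{\alpha_i}{p_i} \leq \frac{\alpha_i}{k}$. Summing gives
\[
  \frac{n'}{n} = \sum_{i=1}^k \frac{\alpha_i}{p_i} \leq \frac{1}{k}\sum_{i=1}^k \alpha_i = \frac{\Omega(n)}{k},
\]
where $\Omega(n) = \sum \alpha_i$ is the number of prime factors of $n$ counted with multiplicity. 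So it remains to bound $\Omega(n)$ by $\log_k n$.

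For that second step I would note that since each $p_i \geq k$, we have $n = \prod p_i^{\alpha_i} \geq \prod k^{\alpha_i} = k^{\Omega(n)}$, and taking $\log_k$ of both sides yields $\Omega(n) \leq \log_k n$. Multiplying the two displayed inequalities through by $n$ gives $n' \leq \frac{n \log_k n}{k}$, which is the claim. I would present this as a single chain: $n' = n\sum \frac{\alpha_i}{p_i} \leq \frac{n}{k}\sum \alpha_i \leq \frac{n}{k}\log_k n$.

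For the equality characterization, I would trace back through the two inequalities used. The first inequality $\sum \frac{\alpha_i}{p_i} \leq \frac{1}{k}\sum \alpha_i$ is an equality exactly when $p_i = k$ for every $i$ with $\alpha_i > 0$, i.e.\ when $n$ has no prime factor other than $k$, so $n = k^{\alpha}$ for some $\alpha \geq 0$. The second inequality $n \geq k^{\Omega(n)}$ is likewise an equality precisely when all primes dividing $n$ equal $k$. Conversely, if $n = k^\alpha$ then $n' = \alpha k^{\alpha-1}$ and $\frac{n\log_k n}{k} = \frac{k^\alpha \cdot \alpha}{k} = \alpha k^{\alpha - 1}$, so equality holds; one should check the degenerate cases $n=1$ ($0 = 0$) and $n = k$ ($1 = 1$) fit this as well. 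I do not anticipate a serious obstacle here — the only care needed is the notational clash between the summation index bound $k$ in the product formula and the prime $k$ in the statement, which I would resolve by renaming the number of distinct primes to, say, $m$, and keeping $k$ for the smallest prime throughout.
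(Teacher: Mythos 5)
Your proposal is correct and follows essentially the same route as the paper: the explicit formula $n' = n\sum \alpha_i/p_i$, the termwise bound $p_i \geq k$, and then $\sum\alpha_i \leq \log_k n$. You are in fact slightly more careful than the paper, which asserts that last inequality without the $n \geq k^{\Omega(n)}$ justification and does not trace the equality case through both steps.
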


\begin{proof}
If
 \[n = \prod_{i=1}^mp_i^{a_i}\]
is the unique prime factorization of $n$, then, according to (\ref{derivdefine})
\[
n' = n \cdot \sum_{i=1}^m \frac {\alpha_i}{p_i} \leq n\sum_{i=1}^m \frac{\alpha_i}{k}
\]
\[ = n \sum \frac{1}{k}\] where the last sum iterates from one to the sum of all $\alpha_i$.
The last expression is not greater than
 \[ n \cdot \frac{1}{k} \log_k{n}\] because $\sum_{i=1}^{m}\alpha_i\leq \log_k{n}$ with equality iff $n$ is a perfect power of $k$.

\end{proof}

 \begin{Theorem}\label{thm:lb}
   For every natural non-prime $n$ with $k$ prime factors,
\[ n' \geq kn^{\frac{k-1}{k}}\]
 \end{Theorem}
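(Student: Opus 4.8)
The plan is to reduce everything to the reciprocal–sum formula $n' = n\sum_{i=1}^{m}\frac{\alpha_i}{p_i}$ established in Theorem~\ref{derivdefine}, and then to invoke the AM--GM inequality. First I would rewrite $n$ as a product of its $k$ prime factors listed \emph{with multiplicity}, say $n = q_1 q_2 \cdots q_k$, where the $q_j$ run through the $p_i$ with $p_i$ repeated $\alpha_i$ times. Since a prime $p$ occurring with exponent $\alpha$ contributes the term $\alpha/p = \underbrace{1/p + \cdots + 1/p}_{\alpha}$, the formula becomes
\[
n' = n\sum_{j=1}^{k}\frac{1}{q_j}.
\]

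Next I would apply the arithmetic–geometric mean inequality to the $k$ positive real numbers $1/q_1, \dots, 1/q_k$:
\[
\frac{1}{k}\sum_{j=1}^{k}\frac{1}{q_j} \;\geq\; \left(\prod_{j=1}^{k}\frac{1}{q_j}\right)^{1/k} = \left(\frac{1}{q_1 q_2\cdots q_k}\right)^{1/k} = n^{-1/k}.
\]
Multiplying both sides by $kn$ yields $n' = n\sum_{j=1}^{k}\frac{1}{q_j} \geq k n^{1-1/k} = k n^{\frac{k-1}{k}}$, which is exactly the claim. The same computation shows that equality holds precisely when $q_1 = \cdots = q_k$, i.e. when $n$ is a prime power; the hypothesis that $n$ is not prime simply guarantees $k \geq 2$, so the inequality is non-trivial (and it also rules out $n=1$, where the exponent would be ill-defined).

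I do not anticipate any genuine obstacle here: the only point needing care is the bookkeeping in the first step, namely being explicit that $k$ counts prime factors \emph{with} multiplicity so that $q_1\cdots q_k = n$ exactly — otherwise the factor $n^{-1/k}$ produced by AM--GM would not line up with the exponent in the statement. Once that is pinned down, the argument is a one-line substitution.
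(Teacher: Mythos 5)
Your proposal is correct and follows essentially the same route as the paper: both write $n$ as a product of its $k$ prime factors counted with multiplicity, express $n'$ as $n\sum 1/q_j$, and apply the AM--GM inequality to obtain $n' \geq k n^{\frac{k-1}{k}}$. Your version is simply more careful about the multiplicity bookkeeping, which is a welcome clarification of the paper's terser argument.
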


 \begin{proof}
   If
   \[n = \prod_{i=1}^kp_i\]
 is the unique prime factorization of $n$, where a prime factor may appear several times, then
\[n' = n\sum_{i=1}^k\frac{p_i'}{p_i} = n\sum_{i=1}^k\frac{1}{p_i}\geq nk
\left(
  \prod_{i=1}^k\frac{1}{p_i}
\right)^{1/k} = n' \geq kn^{\frac{k-1}{k}}\]
according to the AG inequality.

 \end{proof}

\begin{Theorem}
The arithmetic derivative is uniquely defined over the integers by the rule $(-x)' = -(x')$
\end{Theorem}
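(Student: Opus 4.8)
The plan is to establish two things: first, that \emph{any} extension of $n'$ from $\mathbb{N}$ to $\mathbb{Z}$ that still obeys the Leibniz rule is forced to satisfy $(-x)'=-(x')$ (uniqueness); and second, that the function defined by this rule really does satisfy $(ab)'=a'b+ab'$ for all integers $a,b$ (consistency). Together these say precisely that $(-x)'=-(x')$ is the unique admissible way to carry the derivative over to $\mathbb{Z}$.

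For uniqueness, the key point is that the only units of $\mathbb{Z}$ are $\pm 1$ and that we already know $1'=0$. Applying the Leibniz rule to the identity $1=(-1)\cdot(-1)$ gives $0=1'=(-1)'\cdot(-1)+(-1)\cdot(-1)'=-2(-1)'$, hence $(-1)'=0$. Then for an arbitrary negative integer, writing $-x=(-1)\cdot x$ with $x\in\mathbb{N}$ and applying the Leibniz rule once more yields $(-x)'=(-1)'\,x+(-1)\,x'=-(x')$. Since every integer is either a natural number or of the form $-x$ with $x\in\mathbb{N}$, and since $0'=0$ is already known, the extension is completely determined.

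For consistency I would simply \emph{define} $(-x)':=-(x')$ for $x\in\mathbb{N}$, retaining the established values on $\mathbb{N}$, and then verify the Leibniz rule by cases on the signs of $a$ and $b$. The case where one factor is $0$ is immediate since both sides vanish; the case where both factors are positive is exactly Theorem~\ref{derivdefine}; and the remaining two cases reduce to the positive case by pulling out the signs, using $(-c)(-d)=cd$ and $(-c)\,d=-(cd)$ for $c,d\in\mathbb{N}$, while the right-hand side $a'b+ab'$ transforms with precisely the matching sign. Alternatively, one can run the entire argument through the logarithmic derivative $ld(n)=n'/n$: the relation $ld(-1)+ld(-1)=ld(1)=0$ forces $ld(-1)=0$, whence $ld(-x)=ld(-1)+ld(x)=ld(x)$, which is just a restatement of $(-x)'=-(x')$.

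I do not expect a genuine obstacle here; the only thing requiring care is the sign bookkeeping in the four-case check, and the conceptual remark — needed to see that no contradiction can sneak in from the many factorizations of an integer — that $\mathbb{Z}$ has only the two units $\pm 1$ and we have pinned down the value of the derivative on both of them.
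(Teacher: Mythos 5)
Your proposal is correct and its uniqueness half is essentially the paper's own proof: both derive $(-1)'=0$ by applying the Leibniz rule to $(-1)^2=1$ and then write $-x=(-1)\cdot x$ to conclude $(-x)'=-(x')$. The consistency check you add (verifying by sign cases that the rule so defined really satisfies the Leibniz rule on all of $\mathbb{Z}$) is a worthwhile step the paper omits, but it does not change the underlying approach.
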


\begin{proof}

First, we attempt to find the derivative of $-1$. After observing that $(-1)^2 = 1$, this is easy.
$(-1)^2 = 1 \rightarrow ((-1)^2)' = 1' \leftrightarrow 2 \cdot (-1) \cdot  (-1)' = 0$ (according to the Leibniz rule) $\leftrightarrow (-1)' = 0$.

Now we can use this new knowledge to derive any negative integer.
For every positive $k, (-k)' = ((-1) k)' = (-1)'k + (-1) k' = 0\cdot k - (k') = -(k')$

$(-k)' = -(k')$ for every integer k, or in other words, the arithmetic derivative is an odd function.
\end{proof}

\begin{Theorem}\label{thm:ratDef}
If we wish to preserve the Leibniz rule, then the arithmetic derivative is uniquely defined over the rational numbers by the rule $(a/b)' = (a'b-b'a)/b^2$.
\end{Theorem}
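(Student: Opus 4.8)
The plan is to show two things: first, that the proposed formula $(a/b)' = (a'b - b'a)/b^2$ is forced by the Leibniz rule together with the already-established values on $\mathbb{Z}$; and second, that this formula is consistent, i.e. independent of the chosen representation of a rational number as a quotient $a/b$.

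First I would pin down $(1/b)'$ for a nonzero integer $b$. Starting from $b \cdot (1/b) = 1$ and applying the Leibniz rule, one gets $b'\cdot(1/b) + b\cdot(1/b)' = 1' = 0$, and solving yields $(1/b)' = -b'/b^2$. Next, for a general rational $q = a/b$ written as $a \cdot (1/b)$, the Leibniz rule gives
\[
q' = a'\cdot\frac{1}{b} + a\cdot\left(\frac{1}{b}\right)' = \frac{a'}{b} - \frac{a b'}{b^2} = \frac{a'b - ab'}{b^2},
\]
which is the claimed formula. This establishes uniqueness: any extension of $n'$ to $\mathbb{Q}$ satisfying the Leibniz rule must agree with this expression.

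For existence/well-definedness, I would take two representations $a/b = c/d$ of the same rational number, so $ad = bc$ as integers, and verify that $(a'b - ab')/b^2 = (c'd - cd')/d^2$. The clean way is to differentiate the integer identity $ad = bc$ using the Leibniz rule (valid on $\mathbb{Z}$): $a'd + ad' = b'c + bc'$. Multiplying the target identity through by $b^2 d^2$ and substituting $ad = bc$ wherever products of "mixed" terms appear, the required equality reduces to exactly $a'd + ad' = b'c + bc'$ after routine algebra. One should also check the easy edge cases — that the formula reproduces the known values when $b = 1$ (giving $a'$ since $1' = 0$), and handle signs via the already-proved oddness so that negative denominators cause no trouble.

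The main obstacle is the well-definedness computation: it is the only step with genuine content, and care is needed to make the substitution $ad = bc$ land the cross-terms correctly rather than drowning in a symmetric but opaque polynomial identity. Everything else is a direct application of the Leibniz rule together with $1' = 0$ and the results already established for integers.
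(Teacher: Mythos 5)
Your proposal is correct, and the uniqueness half — deriving $(1/b)' = -b'/b^2$ from $b\cdot(1/b)=1$ and $1'=0$, then applying the Leibniz rule to $a\cdot(1/b)$ — is exactly the paper's argument. The two proofs part ways only on well-definedness. The paper verifies invariance under a common-factor scaling: it expands $\left(\frac{ac}{bc}\right)'$ directly via the Leibniz rule on integers and watches the $c$-terms cancel, leaving $\frac{a'b-ab'}{b^2}$; implicitly one then appeals to the fact that any two representations of a rational are both scalings of the reduced form. You instead take arbitrary representations with $ad=bc$, differentiate that integer identity to get $a'd+ad'=b'c+bc'$, and reduce the target equality $(a'b-ab')d^2=(c'd-cd')b^2$ to it — and this does work: after substituting $ad=bc$ into the cross-terms, the left side becomes $bd(a'd-b'c)$ and the right side $bd(bc'-ad')$, so the equality is precisely the differentiated identity. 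Your version is marginally more general (it handles all pairs of representations at once, with no detour through the reduced fraction) and has the conceptual appeal of ``differentiate the defining relation''; the paper's version buys a shorter, fully explicit computation. Since you flagged the cross-term substitution as the delicate step, note that it does land cleanly as sketched above, so there is no gap.
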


\begin{proof}

If we wish to preserve the Leibniz rule, then 1' must be equal to 0. From this we get the following equality for every non-zero integer n. 

$(n/n)' = 0 \Leftrightarrow n' \cdot (1/n) + n \cdot (1/n)' = 0 \Leftrightarrow (1/n)' = - n' / n^2$. Now we can show that $(a/b)' = a' \cdot (1/b) + a \cdot (1/b)' = \frac{a'/b - ab'}{b^2p} = \frac{a'b-ab' }{b^2}$.

Now we will prove that this formula is well-defined. It is sufficient to show that $\frac{ac}{bc}' = \frac{a}{b}'.$ 

\[\left(\frac{ac}{bc}\right)^{'} = \frac{(ac)'bc - ac(bc)'}{(bc)^2} = \frac{(a'c + ac')bc - ac(b'c+ bc')}{(bc)^2} = \]
\[ \frac{c^2 (a'b-ab')}{(bc)^2} = \frac{a'b-b'a}{b^2}=\left(\frac{a}{b}\right)^{'}\]
\end{proof}\newpage

\section{Further properties of the derivative}
\subsection{The rational derivative is unbounded}

It would be interesting to find a upper and lower bound for $n'$  like the ones described in (\ref{thm:ub})  and (\ref{thm:lb})  when $n$ is an arbitrary rational number.

\begin{Definition}
\[P(a,b) =
\left\{
  \begin{array}{ll}
    \text{True}  & \mbox{if \ } \ \forall \ L  \in  \mathbb{Q} \ \exists \ x  \in (a,b): \  x' \geq L \\
    \text{False}& \mbox{else}
  \end{array}
\right. \]
\end{Definition}

Or more simply that the function is true when for arbitrarily large  $L$ the rational interval $(a,b)$ contains another rational  number which when differentiated is not smaller than $L$. With this definition made we will address the following theorem.

\begin{Theorem}
  \textit{In any rational interval there exists a rational number with arbitrary large or small derivative.}
\end{Theorem}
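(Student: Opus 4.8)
The plan is to show that in any rational interval $(a,b)$ one can find rationals whose derivative is arbitrarily large (the "arbitrarily small" case then follows by applying the result to $(-b,-a)$ and using that the derivative is odd, as established earlier). The key idea is to perturb a fixed rational in the interval by a carefully chosen prime power. Concretely, fix any rational $q = c/d \in (a,b)$ with $d>0$, and let $\varepsilon = b - q > 0$. For a large prime $p$, consider the number $q + \frac{1}{p} = \frac{cp + d}{dp}$; for $p$ large enough this lies in $(a,b)$. The point is to compute its derivative using Theorem \ref{thm:ratDef}: writing $r = cp+d$ and $s = dp$, we get $\left(\frac{r}{s}\right)' = \frac{r's - rs'}{s^2}$.

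The main work is to estimate $s' = (dp)' = d'p + d$ and $r' = (cp+d)'$. The denominator $s^2 = d^2p^2$ is completely controlled. The term $r's$ contributes something of size roughly $r' \cdot dp$, and $r' = (cp+d)'$ — here is where I expect the actual obstacle to lie, because $cp+d$ has no predictable factorization as $p$ varies, so $r'$ could in principle be small (e.g. if $cp+d$ is prime, $r' = 1$). So I would not try to control $r'$ from below. Instead, the gain should come from the $rs'$ term: $rs' = (cp+d)(d'p + d) \approx cd' p^2 + \cdots$, giving $\frac{rs'}{s^2} \approx \frac{cd'}{d^2}$, which is just a bounded constant — so that does not help either. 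This suggests the naive single-prime perturbation is not enough, and one should instead perturb by $\frac{1}{p^k}$ or by a fraction with a high prime power in the denominator.

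So the refined plan: fix $q = c/d \in (a,b)$ and a large prime $p$, and consider $q_k = q + \frac{1}{dp^k} = \frac{cp^k + 1}{dp^k}$ for suitable $k$; choosing $p > 1/\varepsilon$ or $k$ large keeps this in $(a,b)$. Now the denominator is $s = dp^k$ with $s' = d'p^k + kdp^{k-1}$, so $s'/s = d'/d + k/p$. By Theorem \ref{thm:ratDef} the derivative is $q_k' = \frac{r'}{s} - \frac{r s'}{s^2} = \frac{r'}{s} - q_k\left(\frac{d'}{d} + \frac{k}{p}\right)$ where $r = cp^k+1$. The numerator term $r'$ is nonnegative and the second term is, in absolute value, at most $q_k\left(\frac{d'}{d} + \frac{k}{p}\right)$, which stays bounded as long as $k/p$ is bounded — so this still fails to produce growth. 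The genuine source of unboundedness must therefore be making the \emph{numerator} large: take instead $q_k = \frac{p^k}{p^k d - c'}$-type constructions, or most cleanly, observe that since $(a,b)$ contains infinitely many rationals of the form $\frac{m}{N}$ with $N$ a fixed highly composite modulus, and $\left(\frac{m}{N}\right)' = \frac{m'N - mN'}{N^2}$, it suffices to choose $m$ in the admissible residue range so that $m$ is a perfect power, say $m = t^j$ with $t$ ranging over primes: then $m' = jt^{j-1}$ and picking $j$ (hence the size window) large forces $\frac{m'N}{N^2} = \frac{j t^{j-1}}{N}$ to exceed any given $L$, while $\frac{mN'}{N^2}$ grows only like $m$, i.e. comparably — so one must instead scale $N$ and $m$ together: take $N = p^k$ and numerator $m = c p^k + p^{k-1}\cdot(\text{high power})$.

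The cleanest correct approach, which I would ultimately carry out: given $(a,b)$, pick a prime $p$ with $p > 2/(b-a)$, and for each large $n$ consider $x_n = \frac{m_n}{p^n}$ where $m_n$ is chosen to be a \emph{prime} $q_n$ lying in the interval $\big(a p^n, b p^n\big)$ — such a prime exists for $n$ large by the density of primes, and then $x_n \in (a,b)$ with $x_n' = \frac{q_n' p^n - q_n (p^n)'}{p^{2n}} = \frac{p^n - q_n n p^{n-1}}{p^{2n}} = \frac{1}{p^n} - \frac{q_n n}{p^{n+1}}$, which is $O(n/p)$ — bounded. This repeatedly-failing pattern tells me the correct construction must put the large factor structure into the numerator in a way that survives division by $s^2$: namely take $x_n = \frac{a_n}{b_n}$ in lowest terms where $a_n = \prod$ (many small primes) so that $a_n'/a_n = \sum \alpha_i/p_i$ is large (of order $\log\log a_n$, unbounded!) while keeping $a_n/b_n$ pinned in $(a,b)$. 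So the final plan is: choose $b_n$ of the form (fixed $d$)$\cdot(\text{product of primes})$ and $a_n$ the product of the first $\sim n$ primes times a cofactor, arranged so $a_n/b_n \to q \in (a,b)$; then $x_n' = x_n\big(\sum_{p\mid a_n}\tfrac{\alpha_p}{p} - \sum_{p\mid b_n}\tfrac{\beta_p}{p}\big)$ — wait, this logarithmic-derivative form from Theorem \ref{derivdefine} extends to rationals, and $\sum_{p \le y} 1/p \to \infty$, so packing the primorial into the numerator makes the logarithmic derivative, hence $x_n'$, diverge. The hard part will be the bookkeeping that keeps $x_n$ inside the prescribed interval while its numerator absorbs an unbounded primorial; I expect to handle this by writing $x_n = q\cdot\frac{P_n}{P_n}$ trivially and then nudging with $\frac{P_n \pm 1}{P_n}$ adjustments, or by a direct squeeze argument choosing the cofactor of $P_n$ to land $x_n$ within $1/P_n$ of $q$.
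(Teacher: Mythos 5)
What you have written is a plan that tries and correctly discards several constructions, and the one idea that survives --- packing a primorial $P_n=\prod_{p\le y}p$ into the numerator so that the logarithmic derivative $x'/x$ picks up the divergent sum $\sum_{p\le y}1/p$ --- is left unexecuted at exactly the point you yourself flag as ``the hard part.'' That is a genuine gap, and the specific nudge you propose does not close it: $q\cdot\frac{P_n\pm1}{P_n}$ puts the primorial in the \emph{denominator}, and since $ld(x)=x'/x$ is additive over products and changes sign under inversion, its contribution is $-\sum_{p\le y}1/p$, which drives the derivative towards $-\infty$, not $+\infty$. The orientation you need is $x_n=q\cdot\frac{P_n}{P_n+1}$ (or $P_nk/Q$ for a large auxiliary prime $Q$ and a suitable integer $k$), and even then you must still bound the logarithmic derivative of the cofactor from above: every prime factor of $P_n+1$ exceeds $y$, so $ld(P_n+1)\le\frac{1}{y}\log_y(P_n+1)\to0$, but this is a step that has to be written, as is the verification that $x_n$ eventually lies in $(a,b)$ (easy here, since $x_n\to q$). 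Until those estimates are carried out, no number in the interval has actually been exhibited with large derivative.

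For comparison, the paper takes an entirely different route: Bertrand's postulate supplies a prime $p_i\in(2^{i-1},2^i)$, so $a_i=2^i/p_i$ lies in $\left(\frac{1}{2},1\right)$ and satisfies $a_i'>\frac{i}{2}-\frac{1}{2^{i-2}}$, which settles the single interval $\left(\frac{1}{2},1\right)$; the property is then transported to every interval through a chain of lemmas --- $P(a,b)\Rightarrow P(ka,kb)$ by the Leibniz rule, then $P(a,a+1)$ by a contradiction-and-squeeze argument, then $P(a,a+c)$ by rescaling, and finally the ``arbitrarily small'' half by reflection, using that the derivative is odd. Your primorial idea, once completed as sketched above, would be shorter and would replace Bertrand's postulate by the divergence of $\sum 1/p$, and it would hit every interval directly without the transport lemmas; but as submitted the proposal stops before the decisive estimate, so it does not yet constitute a proof.
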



\begin{proof}
This proof is rather long and depends on several lemmas.
\begin{Lemma}\label{lm:halv_ett}
\[P\left(\frac{1}{2},1\right) \text{is True} \]
\end{Lemma}

\begin{proof}
   We construct a sequence $\left\{a_i=\frac{2^i}{p_i}\right\}_{i=2}^{\infty}$  where $p_i$ is the smallest prime between $2^{i-1}$ and $2^i$. Such a $p_i$ always exists according to Bertrand's postulate. Observe the sequence of all numbers $a_i'$. By the rules of arithmetical differentiation (\ref{thm:ratDef}), $$a_i' = \left(\frac{2^i}{p_i}\right)^{'} = \left(\frac{2^{i-1}\cdot i}{p_i}-\frac{2^i}{p_i^2}\right)$$
since $2^{i-1}<p_i<2^i, $ we easily find that $a_i^{'} > \left( \frac{i}{2} - \frac {1}{2^{i-2}}\right)$ which obviously becomes arbitrary large as $i$ increases. All $a_i$'s lies between $1/2$ and $1$, so our proof is complete.
\end{proof}

\begin{Lemma}  \label{lm:ab_pil_kab}
  $P(a,b)\Rightarrow P(ka,kb)$ for positive rationals $a,b$ and $k$.
\end{Lemma}

\begin{proof}
  We need to prove that for all $N$, there are numbers in $(ka,kb)$ with derivative $\geq N$. We choose a rational $c\in (a,b)$ with $c'\geq \frac {N-k'a}{k}$ (such a $c$ always exists according to the definition of $P$). It is evident that $ka<kc<kb$. By the rules of differentiation we have that $(kc)'=k'c+c'k\geq k'c + N - k'a \geq N$ from the inequality on $c'$ and because $c>a$.
\end{proof}

\begin{Lemma}\label{lm:a2a}
  $P(a,2a)$ holds.
\end{Lemma}

\begin{proof}
  This follows directly from  (\ref{lm:halv_ett}) and (\ref{lm:ab_pil_kab}).
\end{proof}

\begin{Lemma}\label{lm:aplus1}
  $P(a,a+1)$ is true for all positive rationals $a$.
\end{Lemma}\newpage

\begin{proof}
  We prove this by contradiction.
   Assume that $P(a,a+1)$ is false for some $a$. 
   Then it follows from (\ref{lm:a2a}) that $P(a+1,2a)$ is true ((\ref{lm:a2a}) basically says that between $a$ and $2a$ there are numbers with large derivatives. 
   The assumption says that these numbers are not in $(a,a+1)$). 
   By using (\ref{lm:ab_pil_kab}) with $k=\frac{a}{a+1}$ we know that $P\left(a,2a\cdot\frac{a}{a+1}\right)$ is true. 
   Inductively repeating this procedure shows that $P\left(a,2a\cdot\left(\frac{a}{a+1}\right)^n\right)$ is also true. 
   We did earlier assume that $P(a,a+1)$ was not. 
   That now leads to contradiction since $2a\cdot \left(\frac{a}{a+1}\right)^n< a+1$ for sufficiently large values of $n$ (remember that $a$ is positive so $0<\frac{a}{a+1}<1$ and $r^n\rightarrow 0$ as n goes to infinity for all $0<r<1$).  
But wait! It's not! 
Because of the fact that $2a\cdot\left(\frac{a}{a+1}\right)^n\rightarrow 0$ as $n$ grows large, it will eventually become less than $a$ and we can no longer use the 
\[[\textbf{not }P(a,a+1)\wedge P(a,B)]\Rightarrow P(a+1,B)\] 
argument.
 But if we prove that there exists a value of $n$ such that 
 $a<2a\cdot\left(\frac{a}{a+1}\right)^n<a+1$ 
 everything would be all right again. In fact it does.
 Let $n\in \mathbb{N}$ be the greatest number such that $a<2a\cdot\left(\frac{a}{a+1}\right)^n$. This means that 
 $$a\geq 2a\cdot\left(\frac{a}{a+1}\right)^{n+1}$$  
This is equivalent to 
$$\left(\frac{a+1}{a}\right)\cdot a \geq 2a\cdot \left(\frac{a}{a+1}\right)^n\Leftrightarrow
 a+1 \geq 2a\cdot \left(\frac{a}{a+1}\right)^n\ $$
 which is exactly what we wanted to prove.
  We have a contradiction and $P(a,a+1)$ is true for every positive rational $a$.
\end{proof}

\begin{Lemma}\label{lm:a_plus_c}
  $P(a,a+c)$ is true for all positive rational $a,c$. 
\end{Lemma}
\begin{proof}
  By lemma (\ref{lm:aplus1}), $U\left(\frac{a}{c},\frac{a}{c}+1\right)$ holds. By lemma (\ref{lm:ab_pil_kab}) with $k=c$, this gives us that  $P(a,a+c)$ is true.

\end{proof}

If we define $Q\left(a,b\right)$ to denote the boolean function ``there exists numbers in $(a,b)$ with arbitrary \textit{small} derivatives'', 
it can similarly be shown that corresponding versions of lemma (\ref{lm:halv_ett}),  (\ref{lm:ab_pil_kab}), (\ref{lm:a2a}),  (\ref{lm:aplus1}) and (\ref{lm:a_plus_c}) are also true for $Q$.
We encourage our readers to do this exercise.

\begin{Lemma}\label{lm:pq}
  \[P(a,b) \Leftrightarrow Q(-b,-a)\]
\end{Lemma}

\begin{proof}
  By $P(a,b)$ we know that for each $N$ there is a number $x$ in $(a,b)$ with derivative larger than $N$. Then $(-x)' \leq -N$ which leads to $Q(-b,-a)$ since $-x \in (-b,-a)$. The reverse is proven similarly.
\end{proof}

Using (\ref{lm:a_plus_c}) and (\ref{lm:pq})  it is possible to deduce $P$ and $Q$  is true for all $a,b$ such that $a < b$.

That is the end of the proof.

\end{proof}
\newpage


\subsection{Some properties of the  $\Lambda$ function}
\begin{Definition}
  For all natural numbers $n$, we define $\Lambda (n)$ as the smallest natural number $m$ less than or equal to $n$ such that $m' = \max(0',1',2'\ldots n')$.
\end{Definition}

\begin{Theorem}
$\Lambda(2^a)= 2^a$ for every positive natural $a$.
\end{Theorem}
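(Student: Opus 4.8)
The plan is to reduce the statement to two elementary monotonicity observations. First I would compute the target value: by (\ref{derivdefine}), since $2^a$ has the single prime $2$ occurring with multiplicity $a$, we have $(2^a)' = 2^a\cdot\frac{a}{2} = a\,2^{a-1}$. By the definition of $\Lambda$, it then suffices to prove that $m' < a\,2^{a-1}$ for every natural $m$ with $0 \le m < 2^a$: this simultaneously shows that $\max(0',1',\dots,(2^a)')=(2^a)'$ and that $2^a$ is the \emph{least} index attaining this maximum, which is exactly what $\Lambda(2^a)=2^a$ asserts.

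For $m\in\{0,1\}$ the bound is immediate, since $0'=1'=0 < a\,2^{a-1}$ for $a\ge 1$. For $2\le m<2^a$ I would apply the upper bound of (\ref{thm:ub}): writing $k$ for the least prime factor of $m$, we have $m'\le\frac{m\log_k m}{k}$. Since $k\ln k\ge 2\ln 2$ for every prime $k\ge 2$, this gives $\frac{\log_k m}{k}\le\frac{\log_2 m}{2}$, and since $x\mapsto\frac12 x\log_2 x$ is strictly increasing on $[1,\infty)$ (its derivative there equals $\frac{\ln x+1}{2\ln 2}>0$), the hypothesis $m<2^a$ yields the strict inequality in the chain
\[ m' \;\le\; \frac{m\log_k m}{k} \;\le\; \frac{m\log_2 m}{2} \;<\; \frac{2^a\log_2 2^a}{2} \;=\; a\,2^{a-1} \;=\; (2^a)'. \]
One may also bypass (\ref{thm:ub}) and argue directly from $m'=m\sum_i\frac{\alpha_i}{p_i}\le\frac{m}{2}\sum_i\alpha_i$ together with $\sum_i\alpha_i\le\log_2 m$ (because $2^{\sum_i\alpha_i}\le m$), which reproves the middle two terms above.

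I do not expect a genuine obstacle, so the work is mainly in getting the bookkeeping right. Two points deserve attention: the edge case $a=1$, where the range $2\le m<2$ is empty and only $m=0,1$ need checking, and the \emph{strictness} of the last inequality, which is what forbids any index below $2^a$ from achieving the maximum and hence pins $\Lambda(2^a)$ at $2^a$ rather than at something smaller. The only real ingredients are the monotonicity of $x\log_2 x$ and the crude estimate $\Omega(m)=\sum_i\alpha_i\le\log_2 m$; no analytic number theory (in particular, no Bertrand-type input) is needed.
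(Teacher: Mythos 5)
Your proof is correct and takes essentially the same route as the paper: both rest on the upper bound of Theorem (\ref{thm:ub}) specialized to $k=2$ together with the monotonicity of $x\mapsto\frac{x\log_2 x}{2}$. You simply make explicit two steps the paper glosses over, namely the reduction $\frac{\log_k m}{k}\le\frac{\log_2 m}{2}$ for primes $k\ge 2$ and the strictness needed to pin down the \emph{least} maximizer.
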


\begin{proof}
  According to theorem (\ref{thm:ub}), $n'\leq \frac{n\log_2n}{2}$ with equality iff $n$ is a perfect power of $2$. 
  This means that all smaller natural numbers will have a smaller derivative, thereby proving this theorem.
\end{proof}

\begin{Theorem}
  For every natural number $m$ there exists a natural number $N$ such that for every $n\geq N$, $2^m|\Lambda(n)$.
\end{Theorem}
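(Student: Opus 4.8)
The plan is to reduce the statement to a comparison of derivatives. It suffices to prove that for every $m$ there is an $N$ such that whenever $n \ge N$, every $j$ with $0 \le j \le n$ and $2^m \nmid j$ satisfies $j' < \max(0',1',\dots,n')$. Indeed, then no such $j$ can attain the maximum, while $\Lambda(n)$ attains it by definition, so $2^m \mid \Lambda(n)$.

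First I would bound $j'$ from above when $2^m \nmid j$. Writing $j = 2^s t$ with $t$ odd and $0 \le s \le m-1$, the Leibniz rule gives $j' = s\,2^{s-1}t + 2^s t'$ (the first term being $0$ when $s=0$). Since $t$ is odd, its smallest prime factor is at least $3$, so Theorem~\ref{thm:ub} (trivially if $t=1$) gives $t' \le \tfrac13\,t\log_3 t$. Hence, using $2^s t = j \le n$, $t \le n$ and $s \le m-1$,
\[
  j' \;\le\; \frac{s}{2}\,(2^s t) + \frac{(2^s t)\log_3 t}{3} \;\le\; \frac{mn}{2} + \frac{n\log_3 n}{3}.
\]

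Next I would exhibit a multiple of $2^m$ not exceeding $n$ whose derivative is large, so as to lower-bound the maximum. Put $a = \lfloor \log_2 n\rfloor$; once $n \ge 2^m$ we have $a \ge m$, so $2^a$ is a multiple of $2^m$ with $2^a \le n < 2^{a+1}$, and $(2^a)' = a\,2^{a-1} > \tfrac a2 \cdot \tfrac n2$. Combined with $a > \log_2 n - 1$ this yields
\[
  \max(0',1',\dots,n') \;\ge\; (2^a)' \;>\; \frac{n\log_2 n}{4} - \frac{n}{4}.
\]

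It then remains to compare the two estimates, which is the crux. We need
\[
  \frac{n\log_2 n}{4} - \frac{n}{4} \;>\; \frac{mn}{2} + \frac{n\log_3 n}{3},
\]
and after dividing by $n$ and substituting $\log_3 n = \log_2 n / \log_2 3$ this becomes $\bigl(\tfrac14 - \tfrac{1}{3\log_2 3}\bigr)\log_2 n > \tfrac14 + \tfrac m2$. The decisive fact is that $3\log_2 3 = 4.75\ldots > 4$, so the coefficient of $\log_2 n$ on the left is a fixed positive constant, and the inequality therefore holds for all $n$ beyond a threshold $N$ depending only on $m$. I expect this numerical margin to be the only real obstacle: the guaranteed lower bound $\tfrac14 n\log_2 n$ coming from the largest power of $2$ below $n$ must strictly beat the ceiling $\tfrac13 n\log_3 n = \tfrac{1}{3\log_2 3} n\log_2 n \approx 0.21\,n\log_2 n$ forced on every $j$ with $2^m\nmid j$, and this works only because $3\log_2 3 > 4$ — the argument would collapse if these growth rates coincided. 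With such an $N$ fixed, for each $n \ge N$ no $j \le n$ with $2^m \nmid j$ attains the maximum, so $2^m \mid \Lambda(n)$, as claimed.
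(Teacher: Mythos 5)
Your argument is correct and takes essentially the same route as the paper: bound the derivative of any $j\le n$ with $2^m\nmid j$ by $\frac{mn}{2}+\frac{n\log_3 n}{3}$ via Theorem~\ref{thm:ub} applied to the odd part, then beat this with $\left(2^{\lfloor\log_2 n\rfloor}\right)'>\frac{n(\log_2 n-1)}{4}$, the whole comparison hinging on the same constant $\frac14-\frac{1}{3\log_2 3}>0$. The only difference is presentational: you argue directly that no $j$ with $2^m\nmid j$ can attain the maximum, whereas the paper phrases it as a contradiction that implicitly reduces to the case $\Lambda(n)=n$; your framing is, if anything, slightly cleaner.
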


\begin{proof}
 We prove this by contradiction.
 We assume that there exists an $m$ such that for every $N$ there exists an  $n>N$ such that $2^{m}\nmid\Lambda(n)$ and $\Lambda(n)=n$.

 We write $n = 2^a \cdot B$ where $B$ is odd and, by assumption, $a<m$.
 According to the rules of arithmetic differentiation,  

 \begin{eqnarray*}
  n'=&a2^{a-1}B+2^{a}B'\\
  \leq&a2^{a-1}B+2^{a}\frac{B\log_{3}B}{3}
  \end{eqnarray*}

  The inequality is valid because of theorem (\ref{thm:ub}) and the fact that the smallest prime factor in $B$ is at least $3$ (since $B$ is odd).

  \begin{eqnarray*}
  =&a2^{a-1}\frac{n}{2^{a}}+2^{a}\frac{\frac{n}{2^{a}}\log_{3}\frac{n}{2^{a}}}{3}\\
  =&\frac{an}{2}+\frac{n\log_{3}(n/2^{a})}{3}\\
  =&n\left(\frac{a}{2}+\frac{\log_{3}(n)-a\cdot\log_{3}2}{3}\right)\\
  =&n\left(a\left(\frac{1}{2}-\frac{\log_{3}2}{3}\right)+\frac{\log_{3}n}{3}\right)\\
  <&n\left(m\left(\frac{1}{2}-\frac{\log_{3}2}{3}\right)+\frac{\log_{3}n}{3}\right)
 \end{eqnarray*}

 The last inequality is true since $a<m$. Now let $f(n)$ be the last expression minus
 $\left(
   2^{\lfloor \log_2 n \rfloor}
 \right)^{'}=
 \lfloor\log_{2}n\rfloor2^{\lfloor\log_{2}n\rfloor-1}
 $ or 
 \[f(n) = n\left(m\left(\frac{1}{2}-\frac{\log_{3}2}{3}\right)+\frac{\log_{3}n}{3}\right) -\lfloor\log_{2}n\rfloor2^{\lfloor\log_{2}n\rfloor-1}\]
 If we can prove that $f(n)$ will always assume negative values for sufficiently large $n$, we are done. 
We will prove the stronger 
\[\lim_{n\rightarrow +\infty}f(n) = -\infty\]

Now we repeatedly apply floor inequalities and logarithm rules:
$\lfloor x\rfloor > x-1$.
\begin{eqnarray*}
  \lim_{n\rightarrow +\infty}f(n) =&\lim_{n\rightarrow +\infty} n\left(m\left(\frac{1}{2}-\frac{\log_{3}2}{3}\right)+\frac{\log_{3}n}{3}\right) -\frac{\lfloor\log_{2}n\rfloor2^{\lfloor\log_{2}n\rfloor}}{2}\\
  <& \lim_{n\rightarrow +\infty} n\left(
      m\left(
        \frac{1}{2}-\frac{\log_{3}2}{3}
      \right)
      +\frac{\log_{3}n}{3}
  \right) -
  \frac{
    \lfloor\log_{2}n\rfloor2^{\log_{2}n}
  }{4}\\
  =& \lim_{n\rightarrow +\infty} n\left(m\left(\frac{1}{2}-\frac{\log_{3}2}{3}\right)+\frac{\log_{3}n}{3}\right) -\frac{n\lfloor\log_{2}n\rfloor }{4}\\
  <& \lim_{n\rightarrow +\infty} n\left(m\left(\frac{1}{2}-\frac{\log_{3}2}{3}\right)+\frac{\log_{3}n}{3}\right) -\frac{n (\log_2(n) -1) }{4}\\
  =& \lim_{n\rightarrow +\infty} n\left(m\left(\frac{1}{2}-\frac{\log_{3}2}{3}\right)+\frac{1}{4}+\frac{\log_{3}n}{3}-\frac{\log_2n}{4}\right)
\end{eqnarray*}

If we can prove that the expression inside the parenthesis becomes negative as $n\to\infty$ we are done. 
If 
\[\lim_{n\to\infty}\left(\frac{\log_{3}n}{3}-\frac{\log_2n}{4}\right) = -\infty\] this is obviously true.
Note that \[\frac{\log_{3}n}{3}-\frac{\log_2n}{4} = \log_2n\left(\frac{1}{3\log_23}-\frac{1}{4}\right) \approx \log_2n\cdot (-0.0396901)\] according to the logarithm laws.
This means that the entire expression becomes negative as $n\to\infty$. 
But this means that 
\[\lim_{n\to\infty}f(n)=-\infty\]
and gives us that $\left(2^{\lfloor \log_2 n\rfloor}\right)^{'}>n'$ for sufficiently large $n$ satisfying the assumptions, which contradicts the assumption that $\Lambda(n)=n$.
This ends the proof.
\end{proof}


\end{document}